\documentclass[10pt]{article}

\usepackage{amsmath,amsfonts,amssymb,amsthm,graphicx}
\usepackage{enumerate}
\usepackage{etoolbox}
\usepackage{color}
\usepackage[colorlinks=true,citecolor=blue,pdfpagemode=UseNone,pdfstartview=FitH]{hyperref}
\usepackage{cite}

\usepackage[T2A]{fontenc}
\newenvironment{cyr}{}{}

\allowdisplaybreaks[4]

\newcommand{\noop}[1]{}

\renewcommand{\d}{\,\mathrm{d}}

\newcommand{\N}{\mathbb{N}}

\newcommand{\R}{\mathbb{R}}

\newcommand{\m}{\mathbf{m}}

\newcommand{\st}{:}

\newcounter{cnumber}
\newcommand{\cnew}{\stepcounter{cnumber}c_{\arabic{cnumber}}}
\newcommand{\cold}{c_{\arabic{cnumber}}}
\newcommand{\colder}{c_{\the\numexpr\value{cnumber}-1\relax}}
\newcommand{\coldest}{c_{\the\numexpr\value{cnumber}-2\relax}}
\newcommand{\clabel}[1]{%
  \newcounter{#1}%
  \setcounter{#1}{\value{cnumber}}%
}
\newcommand{\cref}[1]{c_{\arabic{#1}}}

\theoremstyle{plain}
\newtheorem{theorem}{Theorem}
\newtheorem{corollary}[theorem]{Corollary}
\newtheorem{lemma}[theorem]{Lemma}
\newtheorem{proposition}[theorem]{Proposition}

\theoremstyle{definition}

\theoremstyle{remark}

\newcommand{\tit}%
  {The universal measure of probabilistically nonrandom objects}
\newcommand{\abstr}%
  {A finite object is called probabilistically random
  if it is an algorithmically random element of a finite set
  whose Kolmogorov complexity is relatively small.
  This note studies the amount of probabilistically nonrandom objects
  (those that are not probabilistically random)
  as gauged by the universal measure
  and without assuming any structure on the objects.
  The main results imply that the universal measure of probabilistically nonrandom objects
  is vanishingly small
  and establish the dependence of their universal measure
  on the required degree of probabilistic randomness.}

\begin{document}
\title{\tit}
\author{Vladimir Vovk}
\date{September 11, 2026}
\maketitle
\begin{abstract}
  \smallskip
  \abstr

  The version of this note at \url{http://gtfp.net} (Working Paper 71)
  is updated most often.
\end{abstract}

\section{Introduction}

This note is a natural complement to \cite{Vovk:arXiv2608stoch};
both notes develop an approach to algorithmic statistics
(see, e.g.,
\cite{Gacs/etal:2001,Vereshchagin/Vitanyi:2004,Vereshchagin/Shen:2015,Vereshchagin/Shen:2017})
inspired by Kolmogorov's original definition
of stochastic objects \cite[Note~5.12]{Semenov/etal:2024-full}.
The standard definitions in algorithmic statistics
assume a given structure on the space of finite objects under consideration,
such as the binary strings $\{0,1\}^*$ being partitioned
as the union of $\{0,1\}^n$,
with the lengths of strings playing an important role
in the statements of main results.
Such structures are to some degree arbitrary;
e.g., we may consider other increasing sequences of stopping times,
different from the constant stopping times $0,1,2,\dots$.
Besides, structureless settings are attractive due to their simplicity.

The previous note \cite{Vovk:arXiv2608stoch}
discusses a structureless version of standard results
on the universal measure of $(\alpha,\beta)$-stochastic objects,
where an object is \emph{$(\alpha,\beta)$-stochastic}
if it has a model $\Omega$ of complexity at most $\alpha$
in which its randomness deficiency is at most $\beta$.
(Formal definitions will be given later in the note.)
While the standard results have been interpreted as saying that
``the probability of obtaining a nonstochastic object in a random process is negligible''
\cite[Sect.~4.6]{Vereshchagin/Shen:2017},
the main result of \cite{Vovk:arXiv2608stoch} says
that $(\alpha,\beta)$-nonstochastic objects are abundant:
while the universal measure of $(\alpha,\beta)$-nonstochastic objects
tends to 0 as $\alpha\to\infty$ ($\beta$ plays hardly any role),
it does so extremely slowly.

A fundamental characteristic of a finite object $\omega$ in algorithmic statistics
is its best fit function $\beta_{\omega}$
(to use the terminology of \cite[(II.7)]{Vereshchagin/Vitanyi:2004});
for each $\alpha>0$,
$\beta_{\omega}(\alpha)$ is the smallest randomness deficiency
of $\omega$ w.r.t.\ its model $\Omega$ of complexity at most $\alpha$.
Each function $\beta_{\omega}$ is monotonically decreasing to $0$,
to within an additive constant (it may take small negative values).

A popular alternative to the notion of the best fit function
is that of the structure function for a finite object $\omega$.
Historically, structure functions appeared in print
even before best fit functions \cite{Kolmogorov:1974},
although there is evidence \cite[Sect.~3.3, p.~858]{Cover/etal:1989}
that Kolmogorov defined best fit functions
already in 1973 in his talk
at an information theory conference in Tallinn;
it is definitely true that he defined structure functions in that talk
\cite[the photo in Sect.~2]{Semenov/etal:2024-full}.
(Cover et al.\ \cite[p.~858]{Cover/etal:1989} say that Kolmogorov
``proposed a variant of'' best fit functions,
so it is also possible that those authors consider structure functions
to be a variant of best fit functions.)

At the Tallinn conference Kolmogorov posed the problem
of showing that the best fit functions can take
approximately arbitrary shapes
subject to the restriction of being monotonically decreasing to 0
\cite[p.~858]{Cover/etal:1989}.
This was shown to be indeed the case
by V'yugin in 1987 \cite[Corollary]{Vyugin:1987} in a restricted setting
and by Vereshchagin and Vit\'anyi in 2004 \cite[Corollary~IV.9]{Vereshchagin/Vitanyi:2004}
in general;
see also \cite[Theorem~17.5]{Vereshchagin/Shen:2015}.

The results about possible shapes of best fit functions are qualitative
and do not indicate how common different shapes are.
In this note we will be interested in how common they are
under the universal measure in the structureless setting.
It turns out that under the universal measure the most common shapes are those
characteristic of probabilistically random objects,
which were introduced in print by Kolmogorov in 1974 \cite{Kolmogorov:1974}
in the language of structure functions.
In terms of best fit functions,
an object $\omega$ is probabilistically random
(\begin{cyr}вероятностно случаен\end{cyr})
if $\beta_{\omega}(\alpha)$ becomes small,
which we will interpret as $O(1)$,
already for a relatively small value
(\begin{cyr}сравнительно небольшом значении\end{cyr})
of $\alpha$.
Since best fit functions are monotonically decreasing,
$\beta_{\omega}$ will stay small to the right of $\alpha$ as well.
In our main results, Corollaries~\ref{cor:main-1}, \ref{cor:main-2},
\ref{cor:main-3}, and \ref{cor:main-4},
we will interpret $\alpha$ being ``relatively small'' as
$\alpha=\theta K(\omega)$ (for a small $\theta>0$),
$\alpha=K(\omega)^{\theta}$ (for $\theta\in(0,1)$),
$\alpha=(\log K(\omega))^{\theta}$ (for $\theta>1$),
and $\alpha=\theta\log K(\omega)$ (also for $\theta>1$),
respectively,
where $K(\omega)$ is the complexity of $\omega$.
For the objects $\omega$ of complexity $K(\omega)\ge a$,
the universal measure of probabilistically nonrandom objects
will shrink, as $a\to\infty$, to 0 exponentially fast,
stretched-exponentially fast,
superpolynomially fast,
and polynomially fast, respectively.

In the standard structured setting,
common shapes of best fit functions can be deduced
from the preponderance, under the universal measure, of stochastic objects,
which can be defined as the objects for which
the graphs of their best fit functions pass close to the origin.
In this case the best fit function quickly drops almost to zero
and then stays close to zero.
Therefore, the common objects under the universal measure are probabilistically random.
The situation in structureless algorithmic statistics is radically different,
since now nonstochastic objects are abundant \cite[Theorem~1]{Vovk:arXiv2608stoch}.
Nevertheless, the main result of this note still says
that objects that are not probabilistically random are in a negligible minority
under the universal measure.

There is a certain tension between the statement of \cite{Vovk:arXiv2608stoch}
that nonstochastic objects are abundant
and the statement of this note that there are few probabilistically nonrandom objects.
As interpreted here,
probabilistically random objects are a special case
of $(\alpha,\beta)$-stochastic objects $\omega$
corresponding to $\beta=O(1)$ and $\alpha=f(K(\omega))$ for a slowly growing $f$.
The main difference between our current setting and \cite{Vovk:arXiv2608stoch}
is that now we concentrate on the objects $\omega$ satisfying $K(\omega)\ge a$
for large $a$
(in the next section we will see, in Proposition~\ref{prop:many},
that overall there are many such $\omega$).

There are several possible settings for our topic.
First, as already mentioned,
Kolmogorov's structure function is a popular alternative
to the best fit function of a finite object,
and another alternative is the MDL function
(see, e.g., \cite[II.8 and II.10]{Vereshchagin/Vitanyi:2004}).
Second, in our definitions
we can use plain Kolmogorov complexity $C$ or prefix complexity $K$.
For simplicity and concreteness,
we concentrate on best fit functions and prefix complexity.

Let $\N:=\{1,2,\dots\}$ (more generally, $\N_m:=\{m,m+1,\dots\}$,
with the lower index omitted when $m=1$),
$\log$ be binary logarithm,
$K$ be prefix complexity,
and $\m$ be the universal semimeasure on $\N$,
as defined in, e.g., \cite{Shen/etal:2017book}.
For $A\subseteq\N$, define $\m(A):=\sum_{\omega\in A}\m(\omega)$;
with this extension, we also refer to $\m$ as the \emph{universal measure}
(since formally it becomes a measure).
Let $\tau:\N\to(0,\infty)$ be the tail of $\m$:
$\tau(t):=\m(\{n\st n>t\})$.
The words ``increasing'' and ``decreasing'' will be understood in the wide sense
allowing intervals of constancy;
we will drop ``monotonically'' from now on.
The symbols $O$, $o$, and $\Theta$ will be used only in informal explanations
and, occasionally, in proofs where their meaning is unambiguous.

\section{Main results}

Let us refer to elements of $\N$ as \emph{objects}
(to emphasize that our results and discussions are applicable
to other spaces of finite objects, such as $\{0,1\}^*$,
in computable bijection with $\N$).
A finite set $\Omega\subseteq\N$ containing an object $\omega$
is a \emph{model} for $\omega$;
we are interested in simple models,
in the sense of $K(\Omega)$ being small,
for which the \emph{randomness deficiency}
\[
  d(\omega\mid\Omega)
  :=
  \log\left|\Omega\right|
  -
  K(\omega\mid\Omega)
\]
is small.
For each $\omega\in\N$,
the \emph{best fit function} $\beta_{\omega}:[0,\infty)\to\R\cup\{\infty\}$
is defined by
\[
  \beta_{\omega}(\alpha)
  :=
  \min_{\Omega:K(\Omega)\le\alpha}
  d(\omega\mid\Omega),
\]
$\Omega$ ranging over the models for $\omega$;
as usual, $\min\emptyset:=\infty$.
The effective range of $\alpha$ is $[0,K(\omega)+O(1)]$,
since $\beta_{\omega}(K(\omega)+O(1))=O(1)$
(witnessed by $\Omega:=\{\omega\}$,
whose complexity exceeds $K(\omega)$ by at most an additive constant).
It is clear that the function $\beta_{\omega}$ is decreasing,
piecewise-constant, and right-continuous
(namely, it is constant on each interval $[n,n+1)$, $n\in\N_0$).

We are interested in possible shapes for $\beta_{\omega}$
for complex $\omega$
(although in this note we will only see primitive flat shapes).
First let us see how many such $\omega$ we have overall.
In the following proposition and later on
we let $c_1,c_2,\dots$ stand for positive constants
(either absolute or depending on values or functions regarded as constant).

\begin{proposition}\label{prop:many}
  For some absolute constant $\cnew>1$ and for all $a\in\N$,
  \[
    \tau(a)/\cold
    \le
    \m
    \left(
      \{\omega\in\N\st K(\omega)\ge a\}
    \right)
    \le
    \cold\tau(a).
  \]
\end{proposition}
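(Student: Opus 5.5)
The plan is to prove the two inequalities separately. Both rest on the fact that $\m$ dominates every lower semicomputable semimeasure on $\N$ up to a multiplicative constant, together with the coding theorem $\m(\omega)\asymp 2^{-K(\omega)}$.

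\emph{Lower bound $\tau(a)\le\cold\,\m(\{\omega\st K(\omega)\ge a\})$.} For $n\in\N$ let $S_n:=\{\omega\st 2^n\le\omega<2^{n+1}\}$, so that $|S_n|=2^n$. Define
\[
  \mu(\omega):=\m(n)2^{-n}
  \quad\text{for }\omega\in S_n .
\]
This spreads the mass $\m(n)$ uniformly over $S_n$. It is lower semicomputable, and $\mu(\N)\le\m(\N)\le1$, so $\m\ge\mu/c$ for some constant $c$. Fewer than $2^a$ objects satisfy $K(\omega)<a$. Hence for $n\ge a+1$ at least half of $S_n$ lies in $A_a:=\{\omega\st K(\omega)\ge a\}$. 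Summing over the disjoint sets $S_n$ with $n>a$ gives
\[
  \m(A_a)\ge\frac1c\sum_{n>a}\frac{\m(n)}{2}=\frac{\tau(a)}{2c}.
\]

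\emph{Upper bound $\m(A_a)\le\cold\,\tau(a)$.} First, by the coding theorem, $\m(A_a)\le c\sum_{\omega\in A_a}2^{-K(\omega)}$. I then want to push the weight $2^{-K(\omega)}$ to the point $K(\omega)\in\N$, but the map $\omega\mapsto K(\omega)$ is only upper semicomputable, so this needs care. I would use the standard trick. Fix the computable decreasing approximations $K_s(\omega)\downarrow K(\omega)$. Let $\nu(k)$ be the sum of $2^{-k}$ over all pairs $(\omega,s)$ such that $K_s(\omega)=k$ and the value $k$ first appears at stage $s$ (i.e., the approximation has just dropped to $k$).

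\begin{itemize}
\item Then $\nu$ is lower semicomputable.
\item The values taken by $K_s(\omega)$ are distinct integers all $\ge K(\omega)$. So the contribution of each $\omega$ is at most $2\cdot2^{-K(\omega)}$, and $\nu(\N)\le2$. Hence $\nu\le c'\m$.
\item The final value contributes $2^{-K(\omega)}$ to $\nu(K(\omega))$.
\end{itemize}

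Consequently
\[
  \sum_{\omega\in A_a}2^{-K(\omega)}\le\sum_{k\ge a}\nu(k)\le c'\bigl(\m(a)+\tau(a)\bigr).
\]

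The step I expect to be the main (though mild) obstacle is the boundary term $\m(a)$, since $\tau(a)$ only counts $n>a$. It is handled by noting $K(a+1)\le K(a)+O(1)$, i.e.\ $\m(a)\le c''\m(a+1)\le c''\tau(a)$. This can also be seen by applying domination to the semimeasure $n\mapsto\m(n-1)$.

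Combining the three constants and taking $\cold$ to be the maximum of the constants from the two directions completes the proof.
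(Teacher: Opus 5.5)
Your proof is correct in both directions. The paper does not prove this proposition itself; it defers to Appendix~B of the companion note. So there is no in-paper argument to match, and the useful comparison is with a tool the paper quotes in Sect.~\ref{sec:proof-main}.

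That tool is the counting bound $|D_k|\le 2^{k-K(k)+O(1)}$ for $D_k=\{\omega: K(\omega)\le k\}$. With it your upper bound becomes one line:
\[
\m(\{\omega: K(\omega)\ge a\})\le c\sum_{k\ge a}|D_k|2^{-k}\le c'\sum_{k\ge a}2^{-K(k)}\le c''(\m(a)+\tau(a)).
\]
Your boundary step $\m(a)\le c'''\m(a+1)$ then finishes it.

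Your function $\nu$ is in effect a re-derivation of that counting bound; note that $2^k\nu(k)\le|D_k|$. The ``first appears at stage $s$'' bookkeeping with the approximations $K_s$ is more than you need. Since $D_k$ is enumerable uniformly in $k$, the function $k\mapsto|D_k|2^{-k}$ is already lower semicomputable. Its total mass is at most $\sum_\omega\sum_{k\ge K(\omega)}2^{-k}\le 2$, so the upper semicomputability of $K$ is not really an obstacle here.

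Your lower bound spreads $\m(n)$ uniformly over $[2^n,2^{n+1})$ and uses that fewer than $2^a$ objects have $K(\omega)<a$. It is clean and self-contained. The only fix needed is to set $\mu(1):=0$, since $\omega=1$ lies in no $S_n$ with $n\in\N$.
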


This very simple estimate,
proved, e.g., in the companion note \cite[Appendix~B]{Vovk:arXiv2608stoch},
shows that there are quite a few $\omega$ with $K(\omega)\ge a$.
Their universal measure tends to 0, of course,
as $a\to\infty$, but it does so very slowly
(see, e.g., \cite[Lemma~2]{Vovk:arXiv2608stoch}).
Now let us see that the vast majority of those objects
are probabilistically random as measured by the universal measure.
First let us state a ``master theorem''
(Theorem~\ref{thm:main}, proved in Sect.~\ref{sec:proof-main})
and then specialize it to more intuitive statements
(Corollaries~\ref{cor:main-1}--\ref{cor:main-4}).

\begin{theorem}\label{thm:main}
  Let $g$ be a computable increasing function $g:\N\to[0,\infty)$
  with $g(k)\le k$ for all $k\ge c_g$.
  There are positive constants $\cnew$\clabel{c:main-1}
  and $\cnew$\clabel{c:main-2} such that,
  for all $a\in\N$,
  \begin{equation}\label{eq:main}
    \m
    \left(
      \{
        \omega\in\N\st K(\omega)\ge a,
        \beta_{\omega}(g(K(\omega)))>\cref{c:main-1}
      \}
    \right)
    \le
    \cref{c:main-2}
    \sum_{k=a}^{\infty}
    2^{-g(k)}.
  \end{equation}
\end{theorem}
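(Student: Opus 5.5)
The plan is to construct explicit simple models from the enumeration of the universal semimeasure, and then bound the universal measure of the objects these models miss. Fix a lower semicomputable approximation $\m_s\uparrow\m$ and the corresponding upper semicomputable approximation $K_s\downarrow K$. For each $k$, consider the set $S_k:=\{\omega\st K(\omega)\le k\}$ and its size $N_k=|S_k|$. It is enumerable given $k$, and $N_k\le 2^{k+O(1)}$. The models will be blocks of an enumeration of $S_k$, cut out according to a rounded-down version of $N_k$. Specifically, let $M_k\le N_k$ be $N_k$ with all but its leading $g(k)$ binary digits replaced by zeros. Then
\[
  N_k-M_k\le 2^{-g(k)}N_k .
\]
Write $M_k=\sum_{j\in J}2^j$ and cut the first $M_k$ enumerated elements of $S_k$ into consecutive blocks $\Omega_{k,j}$ of sizes $2^j$, in decreasing order of $j$. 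Each block is computable from $k$ and $M_k$.

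Step one is to handle the objects $\omega$ with $K(\omega)=k$ that lie in some block $\Omega=\Omega_{k,j}$. For such $\omega$ I will show $d(\omega\mid\Omega)=O(1)$ for the blocks with $j\ge k-O(1)$, that is, the blocks of size comparable to $N_k$. Given $\Omega$ one recovers $k$ (up to $O(1)$ choices) and $M_k$. Since $K(\omega)\le K(\Omega)+K(\omega\mid\Omega)+O(1)$ and $K(\Omega)\le g(k)+O(1)\ll k$ in the relevant regime, we get $K(\omega\mid\Omega)\ge k-g(k)-O(1)$. Sharpening this to $\log|\Omega|-O(1)$ needs the standard counting argument: the elements of a block with small conditional complexity form a small fraction of the block. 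Those exceptional elements, together with the objects lying in small blocks, are then shown to have small total $\m$-mass, namely $O(2^{-g(k)})\cdot 2^{-k}N_k$.

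Step two is to bound the objects outside $\bigcup_j\Omega_{k,j}$. There are at most $2^{-g(k)}N_k$ of them at level $k$, each of mass about $2^{-k}$. Their total contribution is therefore at most $2^{-g(k)}\,2^{-k}N_k\le 2^{-g(k)+O(1)}$, and summing over $k\ge a$ gives the right-hand side of (\ref{eq:main}). Proposition~\ref{prop:many} is only needed as a sanity check that $\sum_{k\ge a}2^{-k}N_k$ behaves like $\tau(a)$.

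The main obstacle is the complexity budget. The statement requires $K(\Omega)\le g(k)$ exactly, yet describing $\Omega$ as above also requires $k$, costing about $K(k)$ extra bits, and even $g(k)$ leading bits of $N_k$ cost $g(k)+K(g(k))$ in prefix complexity. Additive constants are harmless: replacing $g$ by $g-c$ only multiplies the bound by $2^c$. Logarithmic overheads, however, cannot be absorbed termwise. My first attempt would be to avoid naming $k$ and $g(k)$ separately, by encoding the rounding information directly as a self-delimiting string of length $g(k)-c$. One option is a prefix of Chaitin's halting probability, which determines a stage $s$ after which the remaining $\m$-mass is at most $2^{-g(k)+c}$; the blocks would then be built from objects whose $K_s$ has already stabilized. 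The fallback is to group the levels $k$ by the value $\lfloor g(k)\rfloor$ and amortize the overhead across each group. I am not certain this fallback closes the gap without the extra $K(\cdot)$ term, and this is the step I expect to need the most care.
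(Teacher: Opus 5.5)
There is a genuine gap, and it is the step you yourself flag as the main obstacle. The proposal never produces, for a typical $\omega$ with $K(\omega)=k$, a model $\Omega\ni\omega$ with $K(\Omega)\le g(k)+O(1)$. The Chaitin-$\Omega$ and level-grouping ideas are left as hopes, and as sketched they do not say how $k$ itself is recovered.

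The missing idea is that the cost $K(k)$ of naming $k$ is paid for by $D_k=\{\omega: K(\omega)\le k\}$ itself. You use only $N_k\le 2^{k+O(1)}$, but in fact $N_k\le 2^{k-K(k)+O(1)}$. So cut the enumeration of $D_k$ into blocks of a single size $2^{b}$, with $b:=k-g(k)+c$, plus a remainder of size $<2^b$. No rounding of $N_k$ is needed: a full block is computable from $k$ and its index, since the enumeration eventually fills it. There are at most $2^{g(k)-K(k)+O(1)-c}$ full blocks. A block is therefore described by a shortest program $k^*$ for $k$, followed by its index written in exactly $g(k)-K(k)+O(1)-c$ bits. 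The decoder knows this length, because it learns $K(k)=|k^*|$ and computes $g(k)$ from $k$. Hence $K(\Omega)\le g(k)$ for $c$ large enough, with no $K(g(k))$ or $K(j)$ term. The objects at level $k$ without such a model lie in the remainder, whose mass is at most $2^{b}\cdot 2^{-k+O(1)}=2^{-g(k)+O(1)}$.

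Your Step one also does not work as written. Since $N_k\le 2^{k-K(k)+O(1)}$, blocks with $j\ge k-O(1)$ exist only for the finitely many $k$ with $K(k)=O(1)$. Perhaps you meant $j\ge\log N_k-c$ for a constant $c$. Then the objects in the smaller blocks can carry mass of order $2^{-c}\,2^{-k}N_k$, not $O(2^{-g(k)})\,2^{-k}N_k$. You would then need good models for all blocks down to $j\approx\log N_k-g(k)$. Naming such a block requires $j$ and the leading bits of $M_k$ above position $j$, which reintroduces an overhead of order $K(\log N_k-j)$ in both $K(\Omega)$ and the deficiency.

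On the other hand, no counting argument inside a block is needed. With equal blocks, every $\omega$ with $K(\omega)=k$ that lies in a block $\Omega$ satisfies $K(\omega\mid\Omega)\ge K(\omega)-K(\Omega)-O(1)\ge k-g(k)-O(1)=\log|\Omega|-c-O(1)$. So $d(\omega\mid\Omega)=O(1)$ follows directly.
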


\begin{corollary}\label{cor:main-1}
  Let $\theta\in(0,1)$ be a computable number.
  There are positive constants $\cref{c:main-1}$ and $\cnew$ such that,
  for all $a\in\N$,
  \begin{equation}\label{eq:main-1}
    \m
    \left(
      \{
        \omega\in\N\st K(\omega)\ge a,
        \beta_{\omega}(\theta K(\omega))>\cref{c:main-1}
      \}
    \right)
    \le
    \cold
    2^{-\theta a}.
  \end{equation}
\end{corollary}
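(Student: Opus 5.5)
Corollary~\ref{cor:main-1} will be derived from Theorem~\ref{thm:main} by taking $g(k):=\theta k$. Since $\theta$ is a computable number in $(0,1)$, this $g$ is a computable increasing function $\N\to[0,\infty)$ with $g(k)\le k$ for all $k\in\N$, so we may take $c_g:=1$. (If the theorem is read as requiring $g$ to be computable as a function with exact real values, note that $\theta k$ is a computable real uniformly in $k$, which is all that is used.) The theorem then provides constants $\cref{c:main-1}$ and $\cref{c:main-2}$ such that, for all $a\in\N$,
\[
  \m
  \left(
    \{
      \omega\in\N\st K(\omega)\ge a,
      \beta_{\omega}(\theta K(\omega))>\cref{c:main-1}
    \}
  \right)
  \le
  \cref{c:main-2}
  \sum_{k=a}^{\infty}
  2^{-\theta k}.
\]
The constant $\cref{c:main-1}$ in \eqref{eq:main-1} is taken to be this one, which is why the corollary reuses that name.

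It remains to sum the geometric series:
\[
  \sum_{k=a}^{\infty}2^{-\theta k}
  =
  \frac{2^{-\theta a}}{1-2^{-\theta}}.
\]
Since $\theta>0$, the denominator is positive. Set the new constant in \eqref{eq:main-1} to $\cref{c:main-2}/(1-2^{-\theta})$; it depends only on $\theta$, which the paper's conventions treat as a constant. This yields \eqref{eq:main-1}.

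There is no real obstacle here beyond checking that $g(k)=\theta k$ meets the hypotheses of Theorem~\ref{thm:main}. The only point needing care is computability, since the best fit function is evaluated at the real number $\theta K(\omega)$. Because $\beta_\omega$ is constant on each interval $[n,n+1)$, only $\lfloor\theta K(\omega)\rfloor$ matters. One could therefore use $g(k):=\lfloor\theta k\rfloor$ instead. This $g$ is computable except at the boundary cases where $\theta k$ is an integer, and those can be handled by perturbing $\theta$. That substitution changes the sum only by the constant factor $2^{1}$, which can be absorbed into the final constant.
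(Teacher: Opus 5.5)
Your proof is correct and is exactly the paper's argument: set $g(k):=\theta k$ in Theorem~\ref{thm:main} and sum the geometric series with ratio $2^{-\theta}$. Your closing aside is not needed, because the proof of Theorem~\ref{thm:main} already reduces to integer-valued $g$, and $\lfloor\theta k\rfloor$ is computable for any fixed computable $\theta$ (by exact arithmetic if $\theta$ is rational, and by approximation if $\theta$ is irrational, since then $\theta k$ is never an integer); moreover, its suggested fix of perturbing $\theta$ would not work, since decreasing $\theta$ to $\theta'$ costs an unbounded factor $2^{(\theta-\theta')a}$ and increasing it reverses the needed set inclusion.
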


\begin{proof}
  Let $g(k):=\theta k$ in Theorem~\ref{thm:main}.
  When evaluating the right-hand side of \eqref{eq:main},
  we sum a geometric series with ratio $2^{-\theta}$.
\end{proof}

Corollary~\ref{cor:main-1} asserts the exponential decay
of the universal measure in $a$.
As this describes the universal measure of the $(\alpha,\beta)$-nonstochastic objects
with $\alpha:=\theta K(\omega)$,
it can be regarded as a version of the standard results
containing $2^{-\alpha}$ as the leading term
(see, e.g., \cite[Corollary and Theorem~3]{Vyugin:1987}
and \cite[Propositions~5 and~22]{Vereshchagin/Shen:2017}).

Since $\beta_{\omega}$ is decreasing, \eqref{eq:main-1} implies
\begin{equation}\label{eq:path}
  \m
  \left(
    \{\omega\in\N\st K(\omega)\ge a,
    \exists\theta'\ge\theta:
    \beta_{\omega}(\theta'K(\omega))>\cref{c:main-1}\}
  \right)
  \le
  \cold
  2^{-\theta a}.
\end{equation}
Applying this to a small $\theta$,
we see that the $\m$-share of the objects that are not probabilistically random
(in a crude sense)
shrinks to zero exponentially fast as $a\to\infty$,
in sharp contrast with the total $\m$-share of objects of complexity at least $a$.

\begin{corollary}\label{cor:main-2}
  Let $\theta\in(0,1)$ be computable.
  There are positive constants $\cref{c:main-1}$ and $\cnew$ such that,
  for all $a\in\N$,
  \[
    \m
    \left(
      \{
        \omega\in\N\st K(\omega)\ge a,
        \beta_{\omega}(K(\omega)^{\theta})>\cref{c:main-1}
      \}
    \right)
    \le
    \cold
    2^{-a^{\theta}+(1-\theta)\log a}.
  \]
\end{corollary}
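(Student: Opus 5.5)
Apply Theorem~\ref{thm:main} with $g(k):=k^{\theta}$. This $g$ is computable because $\theta$ is computable; it is increasing and maps $\N$ into $[0,\infty)$; and $g(k)\le k$ for all $k\ge1$, so we may take $c_g:=1$. The set on the left-hand side is then exactly the set in \eqref{eq:main}. So it remains to show that
\[
  \sum_{k=a}^{\infty}2^{-k^{\theta}}
  \le
  c\,2^{-a^{\theta}+(1-\theta)\log a}
\]
for a constant $c$ depending only on $\theta$.

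To bound the tail sum, compare it with an integral. The function $x\mapsto2^{-x^{\theta}}$ is decreasing, so
\[
  \sum_{k=a}^{\infty}2^{-k^{\theta}}
  \le
  2^{-a^{\theta}}+\int_a^{\infty}2^{-x^{\theta}}\d x .
\]
In the integral, substitute $u:=x^{\theta}$, so that $\d x=\theta^{-1}u^{1/\theta-1}\d u$. This gives
\[
  \int_{a^{\theta}}^{\infty}\theta^{-1}u^{1/\theta-1}2^{-u}\d u ,
\]
which is an upper incomplete gamma integral.

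The main step, and the only place needing care, is to show that this integral is at most $c\,u_0^{1/\theta-1}2^{-u_0}$ with $u_0:=a^{\theta}$. Since $u_0^{1/\theta-1}=a^{1-\theta}$, this is exactly the claimed bound $2^{-a^{\theta}+(1-\theta)\log a}$. I would argue directly as follows. Set $p:=1/\theta-1>0$ and write $u=u_0+t$. Then
\[
  (u_0+t)^p\le 2^{p}\bigl(u_0^p+t^p\bigr),
\]
and $u_0\ge1$ because $a\ge1$. Hence the integral is at most
\[
  \theta^{-1}2^{p}\,2^{-u_0}\int_0^{\infty}\bigl(u_0^p+t^p\bigr)2^{-t}\d t
  \le
  c'\,u_0^p\,2^{-u_0},
\]
where the last inequality uses that $\int_0^{\infty}t^p2^{-t}\d t$ is a finite constant and $u_0^p\ge1$.

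Finally, the stand-alone term $2^{-a^{\theta}}$ is also at most $2^{-a^{\theta}+(1-\theta)\log a}$, because $(1-\theta)\log a\ge0$. Combining the two bounds and absorbing all constants into the constant of Theorem~\ref{thm:main} gives the new constant $\cold$ in the corollary.
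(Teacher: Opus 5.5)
Your proof is correct. Up to the last step it is the paper's argument: Theorem~\ref{thm:main} with $g(k):=k^{\theta}$, comparing the tail sum with $2^{-a^{\theta}}+\int_a^{\infty}2^{-x^{\theta}}\d x$, and substituting $u=x^{\theta}$.

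The difference is in how you bound $\frac1\theta\int_{a^{\theta}}^{\infty}2^{-u}u^{1/\theta-1}\d u$. The paper uses a general log-derivative fact: if $(\ln f)'\le-\delta<0$ on $[A,\infty)$, then $\int_A^{\infty}f\le f(A)/\delta$. For $f(u)=2^{-u}u^{1/\theta-1}$ one has $(\ln f)'\le-\frac{\ln2}{2}$ for large $u$. This gives the explicit constant $\frac{2}{\theta\ln2}$, but only for large $a$, with small $a$ absorbed into the constant. The same criterion is then reused in the proof of Corollary~\ref{cor:main-3}, where the integrand carries the extra factor $2^{u^{1/\theta}}$.

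You instead shift $u=u_0+t$ and use $(u_0+t)^p\le2^p(u_0^p+t^p)$ together with $u_0^p\ge1$. This is more elementary and holds uniformly for every $a\ge1$, so small $a$ need no separate treatment. The cost is a cruder $\theta$-dependent constant, involving $2^p$ and $\int_0^{\infty}t^p2^{-t}\d t$. Your shift would also adapt to Corollary~\ref{cor:main-3}, via subadditivity of $u\mapsto u^{1/\theta}$ for $\theta>1$, though less directly than the paper's criterion.
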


\noindent
In Corollary~\ref{cor:main-2},
the exponential decay of Corollary~\ref{cor:main-1}
becomes stretched exponential.

\begin{proof}[Proof of Corollary~\ref{cor:main-2}]
  Now $g(k):=k^{\theta}$ in Theorem~\ref{thm:main}.
  The summand in the series on the right-hand side of \eqref{eq:main} is decreasing,
  so the sum is at most
  \[
    2^{-a^\theta}+\int_a^\infty2^{-x^\theta}\d x,
  \]
  and substituting $u=x^\theta$ turns the integral into
  \begin{equation}\label{eq:integral}
    \frac1\theta
    \int_{a^\theta}^\infty
    2^{-u}u^{1/\theta-1}
    \d u
    \le
    \frac{2}{\theta\ln2}
    2^{-a^\theta}
    a^{1-\theta}
  \end{equation}
  for large $a$ (small $a$ can be absorbed by $\cold$).
  The inequality in \eqref{eq:integral} is an instance of a general fact
  (also used in the proof of the following corollary):
  if $f>0$ is differentiable with $(\ln f)'\le-\delta<0$ on $[A,\infty)$,
  then $f(u)\le f(A)e^{-\delta(u-A)}$ there,
  and so
  \[
    \int_A^\infty f(u)\d u
    \le
    \frac{f(A)}{\delta}.
    \qedhere
  \]
\end{proof}

\begin{corollary}\label{cor:main-3}
  Let $\theta>1$ be computable.
  There is a positive constant $\cref{c:main-1}$ such that,
  from some $a\in\N$ on,
  \begin{equation*}
    \m
    \left(
      \{
        \omega\in\N
        \st
        K(\omega)\ge a,
        \beta_{\omega}((\log K(\omega))^{\theta})>\cref{c:main-1}
      \}
    \right)
    \le
    a^{1-(\log a)^{\theta-1}}.
  \end{equation*}
\end{corollary}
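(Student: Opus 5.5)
We apply Theorem~\ref{thm:main} with $g(k):=(\log k)^{\theta}$ and then bound the resulting tail sum. First we check the hypotheses. Since $\theta$ is computable, $g$ is computable (as a real-valued function, in the sense the theorem requires). It is increasing and nonnegative on $\N$, since $\log 1=0$. Finally, $(\log k)^{\theta}\le k$ for all $k\ge c_g$, because any power of $\log k$ is eventually dominated by $k$. Theorem~\ref{thm:main} then bounds the measure in question by $\cref{c:main-2}\sum_{k=a}^\infty 2^{-(\log k)^\theta}$, and it remains to show that this is at most $a^{1-(\log a)^{\theta-1}}$ for large $a$. Note that $2^{-(\log k)^\theta}=k^{-(\log k)^{\theta-1}}$, so the target is essentially the first term multiplied by $a$.

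Next we compare the sum with an integral, as in the proof of Corollary~\ref{cor:main-2}. The summand is decreasing, so the sum is at most
\[
  2^{-(\log a)^\theta}+\int_a^\infty 2^{-(\log x)^\theta}\d x .
\]
Substituting $x=2^u$, so that $\d x=2^u\ln2\d u$, turns the integral into
\[
  \ln 2\int_{\log a}^\infty 2^{u-u^\theta}\d u .
\]
We bound this using the general fact quoted in the proof of Corollary~\ref{cor:main-2}, with $f(u):=2^{u-u^\theta}$ and $A:=\log a$. Here $(\ln f)'(u)=\ln2\,(1-\theta u^{\theta-1})$, which is at most $-\delta$ for $u\ge\log a$ once $a$ is large. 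One can take $\delta=\ln 2\,(\theta(\log a)^{\theta-1}-1)$, which tends to infinity. The integral is therefore at most
\[
  \ln 2\,\frac{f(\log a)}{\delta}
  =
  \frac{a\,2^{-(\log a)^\theta}}{\theta(\log a)^{\theta-1}-1}.
\]
The first term $2^{-(\log a)^\theta}$ is smaller than this by a factor of order $a/(\log a)^{\theta-1}$, so the whole sum is at most $(1+o(1))\,a\,2^{-(\log a)^\theta}/(\theta(\log a)^{\theta-1})$.

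The last step, and the only delicate point, is absorbing the constant $\cref{c:main-2}$, because the statement has no multiplicative constant on the right. Using $2^{-(\log a)^\theta}=a^{-(\log a)^{\theta-1}}$, the bound becomes
\[
  \cref{c:main-2}(1+o(1))\,a^{1-(\log a)^{\theta-1}}\big/\bigl(\theta(\log a)^{\theta-1}\bigr).
\]
Since $\theta>1$, the denominator $(\log a)^{\theta-1}\to\infty$, so the prefactor $\cref{c:main-2}(1+o(1))/(\theta(\log a)^{\theta-1})$ is eventually at most $1$. This gives the stated bound $a^{1-(\log a)^{\theta-1}}$ from some $a$ on. This is why the corollary is stated only for sufficiently large $a$ and with no explicit constant.
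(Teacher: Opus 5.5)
Your proof is correct and follows the paper's argument. You apply Theorem~\ref{thm:main} with $g(k)=(\log k)^{\theta}$, bound the sum by its first term plus an integral, control the integral with the log-derivative tail bound, and let the vanishing factor $(\log a)^{1-\theta}$ absorb both the first addend and the constant of Theorem~\ref{thm:main}. The only difference is cosmetic: you substitute $u=\log x$ and use a rate $\delta$ that grows with $a$, whereas the paper substitutes $u=(\log x)^{\theta}$ and uses the fixed rate $\delta=\frac{\ln 2}{2}$, so that the factor $(\log a)^{1-\theta}$ comes from the Jacobian $u^{1/\theta-1}$; both give the same bound up to a constant factor.
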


\noindent
Corollary~\ref{cor:main-3} asserts a weaker rate of decay
of the universal measure of probabilistically nonrandom objects,
namely superpolynomial decay.

\begin{proof}[Proof of Corollary~\ref{cor:main-3}]
  Our computation here is as in the proof of Corollary~\ref{cor:main-2},
  but now $g(k):=(\log k)^{\theta}$,
  and we use the substitution $u=(\log x)^{\theta}$.
  The summand in the series on the right-hand side of \eqref{eq:main} is still decreasing,
  so the sum is at most
  \begin{equation}\label{eq:sum-at-most}
    2^{-(\log a)^{\theta}}
    +
    \int_a^\infty2^{-(\log x)^{\theta}}\d x,
  \end{equation}
  and the substitution turns the integral
  (the other addend in \eqref{eq:sum-at-most} will be negligible)
  into $\frac{\ln2}{\theta}\int_A^\infty f(u) \d u$,
  where $A:=(\log a)^\theta$ and $f(u):=2^{-u+u^{1/\theta}}u^{1/\theta-1}$.
  Since $\theta>1$,
  \[
    (\ln f)'(u)
    =
    \ln2
    \left(
      -1+\frac{1}{\theta} u^{1/\theta-1}
    \right)
    +
    \frac{1/\theta-1}{u}
    \le
    -\frac{\ln2}{2}
  \]
  for large $u$.
  Therefore, for large $a$ the integral in \eqref{eq:sum-at-most} is at most
  \[
    \frac{2}{\theta} f(A)
    =
    \frac{2}{\theta}
    (\log a)^{1-\theta}
    2^{-(\log a)^{\theta}+\log a}.
  \]
  Now
  \[
    2^{-(\log a)^{\theta}+\log a}
    =
    a^{1-(\log a)^{\theta-1}},
  \]
  and for large $a$ the factor $\frac{2}{\theta}(\log a)^{1-\theta}$ absorbs
  both the first addend in \eqref{eq:sum-at-most}
  and the constant $\cref{c:main-2}$ of Theorem~\ref{thm:main},
  which gives the bound as stated.
\end{proof}

\begin{corollary}\label{cor:main-4}
  Let $\theta>1$ be computable.
  There are positive constants $\cref{c:main-1}$ and $\cnew$ such that,
  for all $a\in\N$,
  \begin{equation*}
    \m
    \left(
      \{
        \omega\in\N
        \st
        K(\omega)\ge a,
        \beta_{\omega}(\theta\log K(\omega))>\cref{c:main-1}
      \}
    \right)
    \le
    \cold
    a^{1-\theta}.
  \end{equation*}
\end{corollary}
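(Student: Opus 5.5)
We apply Theorem~\ref{thm:main} with $g(k):=\theta\log k$, exactly as in the proofs of Corollaries~\ref{cor:main-2} and~\ref{cor:main-3}. First we check the hypotheses. Since $\theta$ is computable, $g$ is a computable increasing function from $\N$ to $[0,\infty)$, with $g(1)=0$. Moreover $\theta\log k\le k$ for all $k\ge c_g$, where $c_g$ depends only on $\theta$. The constant $\cref{c:main-1}$ supplied by Theorem~\ref{thm:main} is then the constant required in the corollary. It remains to bound the right-hand side of \eqref{eq:main}, which becomes
\[
  \cref{c:main-2}\sum_{k=a}^{\infty}2^{-\theta\log k}
  =
  \cref{c:main-2}\sum_{k=a}^{\infty}k^{-\theta}.
\]

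Since $\theta>1$, this series converges, and its summand is decreasing in $k$. We therefore bound the sum by the first term plus an integral:
\[
  \sum_{k=a}^{\infty}k^{-\theta}
  \le
  a^{-\theta}+\int_a^\infty x^{-\theta}\d x
  =
  a^{-\theta}+\frac{a^{1-\theta}}{\theta-1}
  \le
  \Bigl(1+\frac{1}{\theta-1}\Bigr)a^{1-\theta}.
\]
The last step uses $a^{-\theta}\le a^{1-\theta}$, which holds for $a\ge1$. Setting the new constant to $\cref{c:main-2}\theta/(\theta-1)$ gives the stated bound for every $a\in\N$, so no exceptional small values of $a$ need to be handled.

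No step here is a real obstacle; all the substance lies in Theorem~\ref{thm:main}. The only points needing care are the hypotheses on $g$. The condition $g(k)\le k$ holds only from some $c_g$ on, but Theorem~\ref{thm:main} permits exactly this. The remaining hypotheses, nonnegativity and monotonicity on all of $\N$, hold because $\log k\ge0$ for $k\ge1$.
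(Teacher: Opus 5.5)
Your proposal is correct and matches the paper's proof: you apply Theorem~\ref{thm:main} with $g(k):=\theta\log k$ and bound $\sum_{k\ge a}k^{-\theta}$ by $a^{-\theta}+\int_a^\infty x^{-\theta}\d x\le\frac{\theta}{\theta-1}a^{1-\theta}$, exactly as the paper does. Your explicit check of the hypotheses on $g$ is a small addition that the paper leaves implicit.
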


\noindent
The rate of decay in $a$ in Corollary~\ref{cor:main-4} is even slower,
namely polynomial.

\begin{proof}[Proof of Corollary~\ref{cor:main-4}]
  This time  $g(k):=\theta\log k$,
  and so we can bound the sum on the right-hand side of \eqref{eq:main} as
  \[
    \sum_{k\ge a}
    k^{-\theta}
    \le
    a^{-\theta}
    +
    \int_a^\infty x^{-\theta} \d x
    =
    a^{-\theta}
    +
    \frac{a^{1-\theta}}{\theta-1}
    \le
    \frac{\theta}{\theta-1}
    a^{1-\theta}.
    \qedhere
  \]
\end{proof}

All four corollaries
can be stated in the form \eqref{eq:path}.
They assert that there are few probabilistically nonrandom objects
as gauged by the universal measure;
e.g., Corollary~\ref{cor:main-3}
says that the probabilistically nonrandom objects are superpolynomially few
and Corollary~\ref{cor:main-4}
says that the probabilistically nonrandom objects are polynomially few.
The corollaries trade off the strictness of the requirement of probabilistic randomness
against the preponderance of probabilistically random objects.

\section{Optimality of the rates}

As in the previous section,
we start with a theorem that looks complicated,
and Corollaries~\ref{cor:opt-1}--\ref{cor:opt-4},
which are simpler and more intuitive,
can be regarded as the main results of this section.

\begin{theorem}\label{thm:opt}
  Let $g:[1,\infty)\to[0,\infty)$ be an increasing function.
  There are constants
  $\cnew$\clabel{c:opt-1},
  $\cnew$\clabel{c:max},
  and $\cnew$\clabel{c:opt-2} such that,
  for
  \begin{equation}\label{eq:thm-opt-1}
    m_g(a)
    :=
    \min
    \left\{
      m\in\N
      \st
      g
      \left(
        m+a+\cref{c:opt-1}\log a
      \right)
      +
      \cref{c:max}
      \le
      m
    \right\}
  \end{equation}
  and for all $a\in\N_2$,
  \begin{equation}\label{eq:thm-opt-2}
    \m
    \left(
      \bigl\{
        \omega\in\N
        \st
        K(\omega)\ge a,
        \beta_{\omega}(g(K(\omega)))
        =
        \infty
      \bigr\}
    \right)
    \ge
    2^{-m_g(a)-\cref{c:opt-2}\log a}.
  \end{equation}
\end{theorem}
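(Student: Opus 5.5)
We will exhibit, for $m:=m_g(a)$, about $2^{a}$ objects $\omega$, each satisfying $K(\omega)\ge a$ and $K(\omega)\le m+a+\cref{c:opt-1}\log a$, and each lying in no finite set of complexity at most $m$. Since $\beta_\omega(\alpha)=\infty$ exactly when no model $\Omega\ni\omega$ has $K(\Omega)\le\alpha$, such an $\omega$ satisfies $\beta_\omega(g(K(\omega)))=\infty$ as soon as $g(K(\omega))\le m$. By monotonicity of $g$ and the definition \eqref{eq:thm-opt-1} we have
\[
  g(K(\omega))\le g(m+a+\cref{c:opt-1}\log a)\le m-\cref{c:max}\le m .
\]
The constant $\cref{c:max}$ gives us room to absorb additive constants in the complexity estimates.

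\textbf{Construction.} Among the halting programs (for the prefix-free universal machine) of length at most $m$, let $p$ be the one with the longest running time $t$. Given $p$, we can compute $t$. We then run every program of length at most $t$ for $t$ steps and let $U$ be the union of all finite sets output in this way. Then $U$ is a finite set that contains every finite set $\Omega$ with $K(\Omega)\le m$, because the shortest program for such an $\Omega$ has length $\le m$ and halts within $t$ steps. For $j<2^{a+1}$, let $\omega_j$ be the $j$th natural number not in $U$. By construction no $\omega_j$ has a model of complexity $\le m$.

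\textbf{Complexity bounds.} The program $p$ is self-delimiting, so it can be followed by a prefix-free code of $a$ (using $O(\log a)$ bits) and then $j$ written in $a+1$ bits. This gives
\[
  K(\omega_j)\le m+a+\cref{c:opt-1}\log a ,
\]
and hence $\m(\omega_j)\ge 2^{-m-a-O(\log a)}$. The $\omega_j$ are pairwise distinct, and fewer than $2^{a}$ numbers have $K<a$. Therefore at least $2^{a}$ of the $\omega_j$ satisfy $K(\omega_j)\ge a$. Summing their universal measure gives at least
\[
  2^{a}\cdot 2^{-m-a-O(\log a)}=2^{-m_g(a)-\cref{c:opt-2}\log a},
\]
which is \eqref{eq:thm-opt-2}. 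The requirement $a\ge2$ only ensures that $\log a>0$, so the $O(1)$ terms can be absorbed into $\cref{c:opt-2}\log a$.

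\textbf{Main obstacle.} The delicate point is to pay only $m+O(1)$ bits for the information ``the list of all sets of complexity $\le m$''. Coding $m$ separately, or using the count of halting programs, would cost an extra $\log m$ bits, and $m$ need not be polynomial in $a$. The longest-running-program trick avoids this: the self-delimiting program $p$ encodes $m$ implicitly, and its running time dominates the halting times of all programs of length $\le m$. I would check carefully that including outputs of all programs of length $\le t$ (rather than $\le m$) within $t$ steps is harmless; it only enlarges $U$, which remains finite and computable from $p$.
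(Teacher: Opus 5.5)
Your proof is correct, but it obtains the ``no cheap model'' property by a different device than the paper. The paper uses the value busy beaver $B(k):=\max\{n\in\N : K(n)\le k\}$ and the one-line observation $K(\max\Omega)\le K(\Omega)+O(1)$. Hence no $\omega>B(\alpha+O(1))$ has a model of complexity at most $\alpha$, and this $O(1)$ is exactly the additive constant in the definition of $m_g(a)$. The paper then takes $\omega_y:=B(m)\cdot2^{a+1}+y$. It describes $\omega_y$ by a shortest program for $B(m)$, which has length $K(B(m))\le m$ directly from the definition of $B$, followed by a code for $a$ and the $a+1$ bits of $y$. So the obstacle you single out (paying only $m+O(1)$ bits without coding $m$) costs nothing there, and no conventions about running times are needed.

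You use the running-time busy beaver instead: the longest-running program of length at most $m$ lets you compute a finite set $U$ covering every model of complexity at most $m$, and you take numbers outside $U$. This needs a little more care. In return, it excludes exactly the models of complexity at most $m$. The additive constant in $g(\cdot)+c\le m$ is therefore not used in your proof, despite your remark that it absorbs constants. Its only role is to make $m\ge c$ large enough that some halting program of length at most $m$ exists, so that $p$ is defined. The two constructions are essentially equivalent. Since $\max U$ is computable from $p$, $U\subseteq\{1,\dots,B(m+O(1))\}$. Conversely, $U\supseteq\{1,\dots,B(m-O(1))\}$, because that interval is itself a model of complexity at most $m$.

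One step needs a sentence of justification. For $U$ to contain every $\Omega$ with $K(\Omega)\le m$, the shortest program for $\Omega$ must have \emph{length} at most $t$, not merely running time at most $t$. The direction you planned to check (that allowing lengths up to $t$ only enlarges $U$) is the harmless one. The needed direction holds because a self-delimiting machine must read its whole program before halting. So the running time of a halting program is at least its length, up to an additive constant depending on the machine model, which you can add to $t$. Alternatively, choose $p$ to maximize the larger of its running time and its length.
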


\noindent
As usual, $\min\emptyset$ in \eqref{eq:thm-opt-1} is understood to be $\infty$,
in which case the right-hand side of \eqref{eq:thm-opt-2}
is understood to be $0$
(and so the inequality \eqref{eq:thm-opt-2} holds trivially
in this case).
For a proof of the theorem, see Sect.~\ref{sec:proof-opt}.
But now let us state more explicit results
corresponding to Corollaries~\ref{cor:main-1}--\ref{cor:main-4}.

\begin{corollary}\label{cor:opt-1}
  Let $\theta\in(0,1)$.
  There is a constant $\cnew$ such that,
  for all $a\in\N_2$,
  \[
    \m
    \left(
      \bigl\{
        \omega\in\N
        \st
        K(\omega)\ge a,
        \beta_{\omega}(\theta K(\omega))
        =
        \infty
      \bigr\}
    \right)
    \ge
    2^{-\frac{\theta}{1-\theta}a-\cold\log a}.
  \]
\end{corollary}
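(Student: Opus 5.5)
We apply Theorem~\ref{thm:opt} with $g(k):=\theta k$, which is increasing on $[1,\infty)$ and nonnegative. What remains is to bound $m_g(a)$ from above by $\frac{\theta}{1-\theta}a+O(\log a)$. Substituting this bound into \eqref{eq:thm-opt-2} then gives the claimed lower bound: the $O(\log a)$ term is merged with $\cref{c:opt-2}\log a$ into a single term $\cold\log a$, valid for $a\in\N_2$ because $\log a\ge1$ there.

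To bound $m_g(a)$, note that the defining condition in \eqref{eq:thm-opt-1} reads
\[
  \theta\left(m+a+\cref{c:opt-1}\log a\right)+\cref{c:max}\le m,
\]
which is equivalent to
\[
  m\ge\frac{\theta}{1-\theta}\left(a+\cref{c:opt-1}\log a\right)+\frac{\cref{c:max}}{1-\theta}.
\]
Because $\theta<1$, this set of $m$ is nonempty, so $m_g(a)<\infty$. Since $m_g(a)$ is the least integer satisfying the condition, it exceeds the right-hand side by at most $1$:
\[
  m_g(a)\le\frac{\theta}{1-\theta}a+\frac{\theta\,\cref{c:opt-1}}{1-\theta}\log a+\frac{\cref{c:max}}{1-\theta}+1.
\]

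For $a\ge2$ we have $1\le\log a$, so the additive constants can be absorbed into a multiple of $\log a$. This gives $m_g(a)\le\frac{\theta}{1-\theta}a+c'\log a$ for a constant $c'$ depending only on $\theta$ and the constants of Theorem~\ref{thm:opt}. Hence the right-hand side of \eqref{eq:thm-opt-2} is at least
\[
  2^{-\frac{\theta}{1-\theta}a-(c'+\cref{c:opt-2})\log a},
\]
and we take the new constant to be $c'+\cref{c:opt-2}$.

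There is no substantive obstacle, since Theorem~\ref{thm:opt} does all the work. The only point to watch is that the constants $\cref{c:opt-1}$, $\cref{c:max}$ and $\cref{c:opt-2}$ of Theorem~\ref{thm:opt} may depend on $g$, and therefore on $\theta$. This is harmless, because the corollary fixes $\theta$ and lets its constant depend on it.
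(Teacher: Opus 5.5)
Your proposal is correct and is the paper's proof: set $g(k)=\theta k$ in Theorem~\ref{thm:opt} and solve the linear defining inequality in \eqref{eq:thm-opt-1} for $m$. You additionally spell out the integer rounding and the absorption of constants into $\log a$ (using $\log a\ge1$ for $a\ge2$), which the paper leaves implicit.
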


\noindent
According to Corollary~\ref{cor:opt-1},
the rate of decay of the universal measure in Corollary~\ref{cor:main-1}
is optimal up to replacing $\theta$ by $\frac{\theta}{1-\theta}$;
and we are mainly interested in small~$\theta$.

\begin{proof}[Proof of Corollary~\ref{cor:opt-1}]
  Set $g(k):=\theta k$.
  The defining inequality in \eqref{eq:thm-opt-1}
  reads $\theta(m+a+O(\log a))+O(1)\le m$.
\end{proof}

\begin{corollary}\label{cor:opt-2}
  Let $\theta\in(0,1)$.
  There is a constant $\cnew$ such that,
  for all $a\in\N_2$,
  \begin{equation}\label{eq:cor-opt-2}
    \m
    \left(
      \bigl\{
        \omega\in\N
        \st
        K(\omega)\ge a,
        \beta_{\omega}(K(\omega)^{\theta})
        =
        \infty
      \bigr\}
    \right)
    \ge
    2^{-a^{\theta}-a^{2\theta-1}-\cold\log a}.
  \end{equation}
\end{corollary}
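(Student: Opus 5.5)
We apply Theorem~\ref{thm:opt} with $g(k):=k^{\theta}$, extended to $[1,\infty)$ in the obvious way, so that $g$ is increasing. It then suffices to show
\[
  m_g(a)\le a^{\theta}+a^{2\theta-1}+O(\log a)
\]
for all $a\in\N_2$. Substituting into \eqref{eq:thm-opt-2} and absorbing the $O(\log a)$ term together with $\cref{c:opt-2}\log a$ into $\cold\log a$ gives \eqref{eq:cor-opt-2}. By \eqref{eq:thm-opt-1}, it is enough to exhibit one $m\in\N$ of that size satisfying
\[
  (m+a+\cref{c:opt-1}\log a)^{\theta}+\cref{c:max}\le m .
\]

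The candidate is $m:=\lceil a^{\theta}+a^{2\theta-1}+C\log a\rceil$, where $C$ is a large constant to be chosen. Put $b:=a+\cref{c:opt-1}\log a$, so that $m+b=a(1+\epsilon)$ with
\[
  \epsilon=\frac{a^{\theta}+a^{2\theta-1}+C\log a+\cref{c:opt-1}\log a+1}{a}=O(a^{\theta-1}+a^{-1}\log a)\to0 .
\]
By concavity of $x\mapsto x^{\theta}$, or Bernoulli's inequality $(1+\epsilon)^{\theta}\le1+\theta\epsilon$, we get
\[
  (m+b)^{\theta}\le a^{\theta}+\theta a^{\theta-1}\bigl(a^{\theta}+a^{2\theta-1}+(C+\cref{c:opt-1})\log a+1\bigr)
  =a^{\theta}+\theta a^{2\theta-1}+\theta a^{3\theta-2}+\theta a^{\theta-1}\bigl((C+\cref{c:opt-1})\log a+1\bigr).
\]
Since $\theta<1$, we have $a^{3\theta-2}\le a^{2\theta-1}$. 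Hence
\[
  \theta a^{2\theta-1}+\theta a^{3\theta-2}\le 2\theta a^{2\theta-1}.
\]
The tail term $\theta a^{\theta-1}(C+\cref{c:opt-1})\log a$ is $o(\log a)$ (indeed $o(1)$ in $a$ for fixed $C$), so for large $a$ it is at most $1$.

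The comparison with $m\ge a^{\theta}+a^{2\theta-1}+C\log a$ then needs
\[
  2\theta a^{2\theta-1}+2+\cref{c:max}\le a^{2\theta-1}+C\log a .
\]
This is where care is needed: the crude bound $2\theta$ is fine only for $\theta\le1/2$. For $\theta\le1/2$ we have $a^{2\theta-1}\le1$, so the inequality reduces to $2+\cref{c:max}+1\le C\log a$, which holds once $C$ is large. For $\theta>1/2$, one should not split off $a^{3\theta-2}$ crudely. Instead, use directly that the increment $(m+b)^{\theta}-a^{\theta}$ is at most $\theta a^{\theta-1}(m+b-a)$, with $m+b-a=m+\cref{c:opt-1}\log a$. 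The requirement then becomes the fixed-point-type inequality
\[
  m\ge a^{\theta}+\theta a^{\theta-1}\bigl(m+\cref{c:opt-1}\log a\bigr)+\cref{c:max}.
\]
It is satisfied by
\[
  m=\frac{a^{\theta}+O(\log a)}{1-\theta a^{\theta-1}}=a^{\theta}+\theta a^{2\theta-1}+O(a^{3\theta-2})+O(\log a).
\]
One checks that $\theta a^{2\theta-1}+O(a^{3\theta-2})\le a^{2\theta-1}$ for large $a$, since $\theta<1$ and $a^{3\theta-2}=o(a^{2\theta-1})$. So this $m$ again lies below $a^{\theta}+a^{2\theta-1}+C\log a$.

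Thus in all cases $m_g(a)\le a^{\theta}+a^{2\theta-1}+C\log a+1$ for $a$ beyond some threshold. The finitely many small $a\in\N_2$ are handled by enlarging $\cold$; this uses $\log a\ge1$ and $m_g(a)<\infty$ there, the latter because $k^{\theta}$ is sublinear. The main obstacle is this bookkeeping for $\theta>1/2$, where the second-order term $a^{2\theta-1}$ is genuinely larger than $\log a$ and the coefficient must be tracked precisely (coefficient $\theta<1$ plus lower-order $a^{3\theta-2}$) rather than bounded crudely.
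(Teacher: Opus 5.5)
Your proposal is correct and follows the paper's route. You apply Theorem~\ref{thm:opt} with $g(k)=k^{\theta}$ and check the defining inequality for $m\approx a^{\theta}+a^{2\theta-1}$ via the tangent-line bound $(a+R)^{\theta}\le a^{\theta}+\theta R a^{\theta-1}$. The paper simply solves the resulting linear inequality in $E=m-a^{\theta}$ for all $\theta\in(0,1)$ at once, getting $E=a^{2\theta-1}+O(1)$; your fixed-point argument already does the same for every $\theta$, so the split at $\theta=1/2$ is unnecessary.
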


\noindent
The exponent $-a^{\theta}-a^{2\theta-1}-\cold\log a$ on the right-hand side of \eqref{eq:cor-opt-2}
is $-a^{\theta}-O(\log a)$ for $\theta\le1/2$ and $-a^{\theta}(1+o(1))$ in general;
so for $\theta\le1/2$ the bounds in Corollaries~\ref{cor:main-2} and~\ref{cor:opt-2} are in close agreement,
the universal measure being $2^{-a^{\theta}\pm O(\log a)}$.

\begin{proof}[Proof of Corollary~\ref{cor:opt-2}]
  Now $g(k):=k^{\theta}$.
  Trying $m=a^{\theta}+E$ in the defining in\-e\-qual\-i\-ty in \eqref{eq:thm-opt-1}
  and using
  \[
    (a+R)^{\theta}\le a^{\theta}+\theta R a^{\theta-1}
  \]
  with $R=a^{\theta}+E+\cref{c:opt-1}\log a$,
  we can see that the defining inequality holds as soon as
  \[
    \theta a^{2\theta-1}
    +
    \theta E a^{\theta-1}
    +
    \theta\cref{c:opt-1}a^{\theta-1}\log a
    +
    \cref{c:max}
    \le
    E,
  \]
  and so $E=a^{2\theta-1}+O(1)$ suffices.
\end{proof}

\begin{corollary}\label{cor:opt-3}
  Let $\theta>1$.
  There is a constant $\cnew$ such that,
  for all $a\in\N_2$,
  \begin{equation*}
    \m
    \left(
      \bigl\{
        \omega\in\N
        \st
        K(\omega)\ge a,
        \beta_{\omega}((\log K(\omega))^{\theta})
        =
        \infty
      \bigr\}
    \right)
    \ge
    a^{-(\log a)^{\theta-1}-\cold}.
  \end{equation*}
\end{corollary}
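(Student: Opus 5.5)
Corollary~\ref{cor:opt-3} follows from Theorem~\ref{thm:opt} with $g(k):=(\log k)^{\theta}$, the same way Corollaries~\ref{cor:opt-1} and~\ref{cor:opt-2} do. The task reduces to bounding $m_g(a)$ from above: we need some $m\in\N$ with
\[
  \bigl(\log(m+a+\cref{c:opt-1}\log a)\bigr)^{\theta}+\cref{c:max}\le m
\]
and $m\le(\log a)^{\theta}+O((\log a)^{\theta-1})$. The right-hand side of \eqref{eq:thm-opt-2} is then $2^{-m_g(a)-\cref{c:opt-2}\log a}$. Writing $2^{-(\log a)^\theta}=a^{-(\log a)^{\theta-1}}$ and $2^{-C\log a}=a^{-C}$, this already has the required form $a^{-(\log a)^{\theta-1}-\cold}$, as long as the excess of $m$ over $(\log a)^\theta$ is $O(\log a)$.

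We try $m:=\lceil(\log a)^{\theta}\rceil+E$ for a suitable $E$, by analogy with the proof of Corollary~\ref{cor:opt-2}. Put $R:=m+\cref{c:opt-1}\log a$. Then $R=O((\log a)^{\theta})+E$, which is much smaller than $a$ when $E$ is modest. So
\[
  \log(a+R)=\log a+\log(1+R/a)\le\log a+\frac{R}{a\ln2}.
\]
By the mean value theorem,
\[
  (\log(a+R))^{\theta}\le(\log a)^{\theta}+\theta\Bigl(\log a+\frac{R}{a\ln 2}\Bigr)^{\theta-1}\frac{R}{a\ln2}.
\]
The correction term is $O((\log a)^{2\theta-1}/a)\to0$, so it is at most $1$ for large $a$. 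Hence $E:=\cref{c:max}+1$ works for all sufficiently large $a$. The finitely many small $a\in\N_2$ are handled by enlarging $\cold$. For each such $a$ the left-hand side of \eqref{eq:thm-opt-2} is positive, because the set contains objects of complexity at least $a$ with $\beta_\omega=\infty$. This is obvious for very complex $\omega$ with $g(K(\omega))<$ the complexity of any model, and if needed it can also be read off from Theorem~\ref{thm:opt} itself, since $m_g(a)$ is finite for every $a$ because $g$ grows sublinearly.

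The bound is then $m_g(a)\le(\log a)^{\theta}+\cref{c:max}+2$, so
\[
  2^{-m_g(a)-\cref{c:opt-2}\log a}\ge a^{-(\log a)^{\theta-1}}\,2^{-\cref{c:max}-2}\,a^{-\cref{c:opt-2}}\ge a^{-(\log a)^{\theta-1}-\cold},
\]
with $\cold:=\cref{c:opt-2}+\cref{c:max}+2$, using $\log a\ge1$ for $a\ge2$. The only step needing care is the uniform handling of small $a$ and the check that the correction term is $o(1)$; both are routine. Unlike Corollary~\ref{cor:opt-2}, no extra term like $a^{2\theta-1}$ appears, because $g$ is polylogarithmic.
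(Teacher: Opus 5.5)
Your proposal is correct and follows the paper's proof: you apply Theorem~\ref{thm:opt} with $g(k)=(\log k)^{\theta}$, bound $\log(a+R)\le\log a+R/(a\ln2)$, and use the mean value theorem to get a correction of order $(\log a)^{2\theta-1}/a$, which gives $m_g(a)\le(\log a)^{\theta}+O(1)$. Your explicit choice of $E$ (the constant of Lemma~\ref{lem:max} plus one, rounded up to an integer if needed) and your handling of small $a$ through the finiteness of $m_g(a)$ just make explicit details that the paper leaves implicit.
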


\noindent
The two bounds in Corollaries~\ref{cor:main-3} and~\ref{cor:opt-3}
agree up to a factor of $a^{O(1)}$,
the universal measure being $a^{-(\log a)^{\theta-1}\pm O(1)}$.

\begin{proof}[Proof of Corollary~\ref{cor:opt-3}]
  In this proof $g(k):=(\log k)^{\theta}$.
  Suppose $R=O((\log a)^{\theta})$
  (we are interested in $R$ of the form $R=m+\cref{c:opt-1}\log a$).
  It is always true that
  $\log(a+R)\le\log a+R/(a\ln2)$
  and hence,
  by the mean value theorem
  under our assumption $R=O((\log a)^{\theta})$,
  \[
    (\log(a+R))^{\theta}
    \le
    \left(
      \log a
      +
      \frac{R}{a\ln2}
    \right)^{\theta}
    \le
    (\log a)^{\theta}
    +
    O
    \left(
      \frac{(\log a)^{2\theta-1}}{a}
    \right).
  \]
  We can see that the defining inequality in \eqref{eq:thm-opt-1}
  holds with $m=(\log a)^{\theta}+O(1)$.
\end{proof}

\begin{corollary}\label{cor:opt-4}
  Let $\theta>1$.
  There is a constant $\cnew$ such that,
  for all $a\in\N_2$,
  \begin{equation}\label{eq:cor-opt-4}
    \m
    \left(
      \bigl\{
        \omega\in\N
        \st
        K(\omega)\ge a,
        \beta_{\omega}(\theta\log K(\omega))
        =
        \infty
      \bigr\}
    \right)
    \ge
    a^{-\cold}.
  \end{equation}
\end{corollary}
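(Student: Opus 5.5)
We apply Theorem~\ref{thm:opt} with $g(k):=\theta\log k$, in the same way as Corollaries~\ref{cor:opt-1}--\ref{cor:opt-3} were derived. By \eqref{eq:thm-opt-2}, it suffices to show that $m_g(a)\le\theta\log a+O(1)$ for all $a\in\N_2$; the right-hand side of \eqref{eq:thm-opt-2} is then at least
\[
  2^{-\theta\log a-O(1)-\cref{c:opt-2}\log a}
  \ge
  a^{-\cold}
\]
for a suitable constant $\cold$, for instance $\cold:=\theta+\cref{c:opt-2}+O(1)$, where the additive $O(1)$ term is absorbed using $\log a\ge1$ for $a\ge2$.

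To bound $m_g(a)$, we try $m:=\lceil\theta\log a\rceil+E$ for a constant $E\in\N$ to be chosen. The defining inequality in \eqref{eq:thm-opt-1} becomes
\[
  \theta\log\bigl(m+a+\cref{c:opt-1}\log a\bigr)+\cref{c:max}\le m.
\]
Since $m+\cref{c:opt-1}\log a=O(\log a)+E$, the argument of the logarithm is at most $a(1+(\theta+\cref{c:opt-1}+E+1)\frac{\log a}{a})$. This is at most $a\cdot C_E$ for a constant $C_E$ depending on $E$, because $\frac{\log a}{a}$ is bounded. Hence the left-hand side is at most $\theta\log a+\theta\log C_E+\cref{c:max}$.

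The only delicate point is that $C_E$ depends on $E$. It does so only linearly inside a logarithm, however, so $\theta\log C_E\le\theta\log(c'+E)$ for a constant $c'$. Therefore $\theta\log(c'+E)+\cref{c:max}\le E$ holds for all sufficiently large $E$, and we fix such an $E$. With this choice the defining inequality holds, so $m_g(a)\le\theta\log a+E+1$. The set in \eqref{eq:thm-opt-1} is nonempty, so $m_g(a)$ is finite and the bound in \eqref{eq:thm-opt-2} is nontrivial.

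The main obstacle is thus only the self-referential dependence of $m$ on itself inside $g$. It is resolved because $g$ grows logarithmically, so a constant surplus $E$ dominates $\theta\log(\text{const}+E)$. Note that $\theta>1$ is not needed for the lower bound itself; it matters only for the comparison with Corollary~\ref{cor:main-4}, which shows that the rate there is polynomially tight.
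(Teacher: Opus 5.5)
Your proposal is correct and follows the paper's proof: you substitute $g(k):=\theta\log k$ into Theorem~\ref{thm:opt} and check that $m=\theta\log a+O(1)$ satisfies the defining inequality in \eqref{eq:thm-opt-1}. Your explicit constant surplus $E$, chosen using the boundedness of $(\log a)/a$, makes uniform over all $a\in\N_2$ what the paper compresses into the observation $\log(a+O(\log a))=\log a+o(1)$.
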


\noindent
The bounds in Corollaries~\ref{cor:main-4} and~\ref{cor:opt-4} match each other
only in a very weak sense: both are polynomial.
The right-hand side of \eqref{eq:cor-opt-4} is particularly non-specific;
it is just the generic polynomial rate.

\begin{proof}[Proof of Corollary~\ref{cor:opt-4}]
  Set $g(k):=\theta\log k$.
  Now the defining inequality in \eqref{eq:thm-opt-1} reads
  $\theta\log(m+a+\cref{c:opt-1}\log a)+\cref{c:max}\le m$,
  which $m=\theta\log a+O(1)$ satisfies
  since $\log(a+O(\log a))=\log a+o(1)$.
\end{proof}

\section{Proof of Theorem~\ref{thm:main}}
\label{sec:proof-main}

This proof uses standard techniques and merely applies them in a new way.
A key role is played by the sets
$D_k:=\{\omega\in\N\st K(\omega)\le k\}$.
It is well known that $\left|D_k\right|\le2^{k-K(k)+O(1)}$;
see, e.g., \cite[Theorem 64]{Shen/etal:2017book}.
We will use the technique of ``bounded complexity lists''
(see, e.g., \cite{Gacs/etal:2001} and \cite[Sect.~4]{Vereshchagin/Shen:2017}),
and we start by fixing a jointly computable way
of enumerating all elements of $D_k$ given $k$.
Without loss of generality we assume that $g$ only takes integer values,
$g:\N\to\N_0$.
Without loss of generality we also assume $a\ge c_g$;
indeed, $a<c_g$ can be absorbed
by the constant $\cref{c:main-2}$ in~\eqref{eq:main}.

Set $k:=K(\omega)$ for a given object $\omega\in\N$.
We will split $D_k$ in the order in which it is enumerated
into blocks of equal size $2^b$
and a remainder of size less than $2^b$
taking $b:=k-g(k)$
(here we are using the condition $g(k)\le k$).
As $\Omega$ we take the block containing $\omega$
(assuming it exists);
let us check that $K(\Omega)\le g(k)+\cnew$\clabel{c:Omega}.
Indeed, to describe $\Omega$,
we first describe $k$, which requires $K(k)$ bits
but also determines $g(k)$ and $b$;
then the ordinal number of $\Omega$ as a block
is determined by a number at most
\begin{equation}\label{eq:exponent}
  2^{k-K(k)+\cnew\clabel{c:count}}/2^b
  =
  2^{k-K(k)+\cold}/2^{k-g(k)}
  =
  2^{g(k)-K(k)+\cold},
\end{equation}
which requires at most $g(k)-K(k)+\cnew$ bits given $k$.
Overall, describing $\Omega$ requires
at most $g(k)+\cref{c:Omega}$ bits.
(If the last exponent in~\eqref{eq:exponent} is negative,
there are no blocks and $\omega$ is guaranteed to be in the remainder,
which case is also covered by the argument below.)

Let us check that $\omega$ is a random element of $\Omega$:
\begin{multline*}
  d(\omega\mid\Omega)
  =
  \log\left|\Omega\right|
  -
  K(\omega\mid\Omega)
  \le
  \log\left|\Omega\right|
  -
  K(\omega) + K(\Omega)
  +
  \cnew\\
  \le
  k-g(k)
  -
  k+g(k)
  +
  \cref{c:main-1}
  =
  \cref{c:main-1}.
\end{multline*}

Now we can deduce essentially the statement of the theorem:
the $\omega$ satisfying $K(\omega)=k\ge a$ and
$\beta_{\omega}(g(k)+\cref{c:Omega})>\cref{c:main-1}$
are among the remainder and so their overall universal measure is less than
$2^b 2^{-k+\cnew}=2^{-g(k)+\cold}$;
it remains to sum over all such $k$.

The argument so far gives us an extra additive constant
in the statement of Theorem~\ref{thm:main}
($g(K(\omega))+\cref{c:Omega}$ in place of $g(K(\omega))$).
It can be eliminated by increasing $b$ by an additive constant:
using blocks of size $2^b$ with $b:=k-g(k)+\cnew$,
$\cold$ being large as compared with $\cref{c:Omega}$,
divides their number by $2^{\cold}$,
so that describing $\Omega$ now requires at most $g(k)$ bits,
while $d(\omega\mid\Omega)$ and the universal measure of the remainder
grow by at most $\cold$ and the factor $2^{\cold}$, respectively,
which is absorbed by $\cref{c:main-1}$ and $\cref{c:main-2}$.
(And if there are no blocks, we have no $\Omega$ to describe.)

\section{Proof of Theorem~\ref{thm:opt}}
\label{sec:proof-opt}

The proof uses the busy-beaver-type function
\begin{equation}\label{eq:B}
  B(k):=\max\{n\in\N\st K(n)\le k\}
\end{equation}
(with $\max\emptyset:=0$, as usual);
this is an increasing function that grows extremely quickly.
The main component of the proof of Theorem~\ref{thm:opt}
is the following lemma.

\begin{lemma}\label{lem:max}
  There exists a constant $\cref{c:max}$ such that,
  for every object $\omega\in\N$ and every $\alpha\ge0$,
  we have $\beta_{\omega}(\alpha)=\infty$ whenever $\omega>B(\alpha+\cref{c:max})$.
\end{lemma}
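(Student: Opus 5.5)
We prove the contrapositive: if $\beta_{\omega}(\alpha)<\infty$, then $\omega\le B(\alpha+\cref{c:max})$. Suppose $\beta_{\omega}(\alpha)<\infty$. By the definition of the best fit function there is a finite model $\Omega\ni\omega$ with $K(\Omega)\le\alpha$; the deficiency $d(\omega\mid\Omega)$ is automatically finite for finite $\Omega$, so only the existence of a simple model matters. The plan is to turn such an $\Omega$ into a natural number that is at least $\omega$ and has complexity at most $K(\Omega)+O(1)$.

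First, consider the computable map $\Omega\mapsto\max\Omega$, where finite sets are coded in some fixed computable way. Since $\omega\in\Omega$, we have $n:=\max\Omega\ge\omega$. Applying a computable function increases prefix complexity by at most an additive constant, so $K(n)\le K(\Omega)+c$ for some constant $c$ that depends only on the coding. Hence $K(n)\le\alpha+c$.

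Second, since $K$ takes integer values, $K(n)\le\alpha+c$ implies $K(n)\le\lfloor\alpha\rfloor+c$. The definition \eqref{eq:B} of $B$ is stated for any real argument (it is a maximum over $n$ with $K(n)\le k$), so $n\le B(\alpha+c)$. Therefore $\omega\le n\le B(\alpha+c)$. Setting $\cref{c:max}:=c$ gives the contrapositive, and hence the lemma: if $\omega>B(\alpha+\cref{c:max})$, then no model of complexity at most $\alpha$ exists, so $\beta_{\omega}(\alpha)=\min\emptyset=\infty$.

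There is no real obstacle. The only step needing care is the bound $K(\max\Omega)\le K(\Omega)+O(1)$ with a constant independent of $\Omega$, which is the standard fact that a computable function does not increase prefix complexity by more than a constant.
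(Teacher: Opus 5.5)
Your proof is correct and is essentially the paper's argument in contrapositive form: the paper also combines $K(\max\Omega)\le K(\Omega)+O(1)$ with $\max\Omega\ge\omega$ and the definition of $B$ to rule out any model of complexity at most $\alpha$ when $\omega>B(\alpha+c)$. The floor step is unnecessary, since $K(n)\le\alpha+c$ already gives $n\le B(\alpha+c)$ directly from the definition of $B$.
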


\begin{proof}
  It is clear that we can choose $\cref{c:max}$ such that
  $K(\max\Omega)\le K(\Omega)+\cref{c:max}$ for all $\Omega$.
  The statement of the lemma follows from there being no models
  $\Omega$ for $\omega>B(\alpha+\cref{c:max})$ of complexity $K(\Omega)\le\alpha$:
  indeed, for any model $\Omega$ for $\omega$,
  \begin{multline*}
    \omega>B(\alpha+\cref{c:max})
    \Longrightarrow
    \max(\Omega)>B(\alpha+\cref{c:max})\\
    \Longrightarrow
    K(\max\Omega)>\alpha+\cref{c:max}
    \Longrightarrow
    K(\Omega)>\alpha.
    \qedhere
  \end{multline*}
\end{proof}

The intuition behind Lemma~\ref{lem:max} is that a model for $\omega$
must in particular name a number at least as large as $\omega$,
so no object has models cheaper than the least complexity of a number
equal to or exceeding it.
The idea of the proof of Theorem~\ref{thm:opt}
is to extend a busy-beaver number by random bits
obtaining many numbers at a controlled complexity.

Now we can prove the theorem.
Let $m:=m_g(a)$,
and for $y\in\{0,1,\dots,2^{a+1}-1\}$ put
$\omega_y:=B(m)\cdot2^{a+1}+y$
(intuitively, extend the binary representation of $B(m)$
by $a+1$ random bits).
These $2^{a+1}$ objects are distinct and exceed $B(m)$,
so $K(\omega_y)>m$ by the definition \eqref{eq:B}.

Now let us bound the complexity of $\omega_y$.
Concatenating a shortest description for $B(m)$
(of length $K(B(m))\le m$, again by the definition of $B$),
a shortest description for $a$,
and $y$ written in exactly $a+1$ bits
gives a prefix-free description for $\omega_y$,
so (assuming $a\ge2$)
\begin{equation}\label{eq:up}
  K(\omega_y)
  \le
  m+a+\cref{c:opt-1}\log a.
\end{equation}
On the other hand,
fewer than $2^{a}$ of the $\omega_y$ have $K(\omega_y)\le a-1$,
so at least $2^{a+1}-2^{a}=2^{a}$ values of $y$ satisfy $K(\omega_y)\ge a$;
let us call these $y$ \emph{suitable};
they will witness \eqref{eq:thm-opt-2}.

For a suitable $y$,
we have,
by \eqref{eq:up}, the monotonicity of $g$,
and the definition \eqref{eq:thm-opt-1} of $m_g(a)$,
\[
  g(K(\omega_y))+\cref{c:max}
  \le
  g\bigl(m+a+\cref{c:opt-1}\log a\bigr)+\cref{c:max}
  \le
  m;
\]
therefore,
\[
  \omega_y>B(m)\ge B(g(K(\omega_y))+\cref{c:max})
\]
and hence, by Lemma~\ref{lem:max}, $\beta_{\omega_y}(g(K(\omega_y)))=\infty$.

On the other hand, by \eqref{eq:up},
\[
  \m(\omega_y)
  \ge
  2^{-m-a-\cref{c:opt-1}\log a-\cnew};
\]
summing over the at least $2^{a}$ suitable $y$
gives at least $2^{-m-\cref{c:opt-2}\log a}$.

\section{Conclusion}

Corollaries~\ref{cor:main-1}--\ref{cor:main-4},
and Theorem~\ref{thm:main} in general,
require $\alpha$ (the argument of $\beta_{\omega}$)
to be at least about $\log K(\omega)$:
the series $\sum_k 2^{-g(k)}$ in Theorem~\ref{thm:main}
converges for $g(k)=\theta\log k$ if and only if $\theta>1$,
which is exactly the range covered by Corollary~\ref{cor:main-4}.
The case $\alpha=\theta\log K(\omega)$ with $\theta\le1$ is, however,
also interesting.
For example, if $\omega$ is a binary data sequence of a fixed length $n$
with negligible $K(n)$ (e.g., $n=B(K(n))$)
generated from a regular statistical model with $p$ parameters,
we expect it to satisfy $K(\omega)=\Theta(n)$ and to be $(\alpha,\beta)$-stochastic
with $\alpha\approx\frac{p}{2}\log n$ and $\beta=O(1)$;
Corollary~\ref{cor:main-4}
says nothing when $p\le2$.
In particular, it would be interesting to know
what shapes of $\beta_{\omega}$ are possible
with non-negligible universal measure in the range $\alpha\le\log K(\omega)$.

In the case of $\alpha=\Theta(\log K(\omega))$ the difference
between sets and probability measures as models
becomes essential (see, e.g., \cite[Proposition 2]{Vereshchagin/Shen:2017}).
This makes extending this note's results to probability measures as models
an interesting direction of research.
Considering alternatives to best fit functions,
first of all structure functions but also MDL functions,
as defined in \cite[(II.8) and (II.10)]{Vereshchagin/Vitanyi:2004},
is also likely to lead to illuminating results.

\subsection*{Acknowledgments}

Claude Opus 5 and Fable 5.1 have been used in exploring proof ideas,
which I reviewed carefully,
and in checking the note.
I take full responsibility for all claims and statements made here,
including mathematical statements and their proofs.

\end{document}